\theoremstyle{plain}
\newtheorem{thm}{Theorem}
\newtheorem*{thmA}{Theorem}
\newtheorem{lem}[thm]{Lemma}
\theoremstyle{definition}
\theoremstyle{remark}
\newtheorem{rmk}{Remark}
\def\C{{\mathbb C}}
\def\A{{\mathbb A}}
\def\P{{\mathbb P}}
\def\cH{\mathcal{H}}
\def\I{\mathcal{I}}
\def\J{\mathcal{J}}
\def\O{\mathcal{O}}
\def\fm{\mathfrak{m}}
\def\f{\phi}
\def\l{\lambda}
\def\m{\mu}
\def\om{\omega}
\def\s{\sigma}
\def\S{\Sigma}
\def\.{\cdot}
\def\^{\widehat}
\def\~{\widetilde}
\def\ov{\overline}
\def\rat{\dashrightarrow}
\def\surj{\twoheadrightarrow}
\def\inj{\hookrightarrow}
\def\({\left(}
\def\){\right)}
\renewcommand{\and}{ \ \ \text{ and } \ \ }
\DeclareMathOperator{\val} {val}
\DeclareMathOperator{\lct} {lct}
\DeclareMathOperator{\mld} {mld}
\DeclareMathOperator{\can} {can}
\begin{document}

\title{Erratum to: Birationally rigid hypersurfaces}

\author{Tommaso de Fernex}

\address{Department of Mathematics, University of Utah, 155 South 1400
East, Salt Lake City, UT 48112-0090, USA}
\email{{\tt defernex@math.utah.edu}}

\begin{abstract}
This note points out a gap in the proof of the main theorem of the article
\emph{Birationally rigid hypersurfaces} published in Invent.\ Math.\ {\bf 192} (2013), 533--566,
and provides a new proof of the theorem. 
\end{abstract}

\thanks{We thank J\'anos Koll\'ar for bringing to our attention the error in Lemma~9.2, and
Lawrence Ein, Mircea Musta\c t\u a, and Fumiaki Suzuki for useful discussions. 
The research was partially supported by 
NSF grant DMS-1402907 and NSF FRG grant DMS-1265285.}

\maketitle

The statements of Lemma~9.1 and Lemma~9.2 in \cite{dF13} are incorrect
(see the remarks at the end of this note for further comments on the errors).
The two lemmas are used in the proof of Theorem~A, which is the main theorem of \cite{dF13};
more precisely, they are used in the proof of Theorem~7.4 from which Theorem~A is deduced.
The error in Lemma~9.2 was brought to our attention by J\'anos Koll\'ar.

While Lemma~9.2 is not essential for the proof and can be circumvented, 
the gap left by the error in Lemma~9.1 appears to be more substantial because 
of the key role that Lemma~9.1 plays in the application of Theorem~B in the proof of Theorem~A.

In this note, we give a new argument to prove Theorem~A
which does not use Theorem~B. This does not fix, however, the proof of Theorem~7.4, 
which should therefore be considered unproven. 
We use the same notation and conventions as in \cite{dF13}.

\begin{thmA}[\protect{\cite{dF13}, Theorem~A}]
For $N \ge 4$, every smooth complex hypersurface $X \subset \P^N$ of degree $N$ is birationally superrigid. 
\end{thmA}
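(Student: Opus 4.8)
The plan is to argue by the method of maximal singularities, in the form of the Noether--Fano--Iskovskikh criterion. Since $X$ is a Fano variety of index one with $-K_X = H$ the hyperplane class and Picard number one, birational superrigidity is equivalent to the assertion that for every movable linear system $\cH \subset |nH|$ on $X$ (with $n \ge 1$) the pair $\left(X, \tfrac1n\cH\right)$ is canonical. I would therefore assume for contradiction that some such pair is not canonical, producing a \emph{maximal center}: an irreducible subvariety $Z \subsetneq X$ that is the center of a divisorial valuation $E$ over $X$ with $a\!\left(E; X, \tfrac1n\cH\right) < 0$. Everything reduces to showing that no such $Z$ exists.

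First I would dispose of the positive-dimensional centers. Because $\cH$ is movable it has no fixed divisor, so $Z$ has codimension at least two in $X$. Taking two general members $D_1, D_2 \in \cH$, their intersection is an effective cycle $\Gamma = D_1 \cdot D_2$ of class $n^2 H^2$, hence of degree $N n^2$ in $\P^N$. A positive-dimensional maximal center forces a multiplicity bound of the shape $\mult_Z \Gamma > n^2$; combined with $\deg \Gamma = N n^2$ this bounds $\deg Z$ from above, and a short analysis of the resulting low-degree subvarieties of $X$ (essentially lines and conics through the center) excludes them. This step is comparatively standard and less delicate than what follows.

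The heart of the matter, and the main obstacle, is to exclude a single point $p \in X$ as maximal center. After cutting $X$ by general hyperplanes through $p$ down to a smooth threefold section, the $4n^2$-inequality for non-canonical points yields $\mult_p \Gamma > 4 n^2$ for the restricted self-intersection, which has degree $N n^2$. When $N = 4$ this already contradicts the elementary bound $\mult_p \Gamma \le \deg \Gamma = 4 n^2$, recovering the classical Iskovskikh--Manin exclusion. For $N \ge 5$ the degree bound is too weak, and naive hyperplane cutting does not help, since the degree and the multiplicity estimate scale together. Producing a contradiction that survives in arbitrary dimension is precisely the point at which the original proof invoked Theorem~B and at which Lemma~9.1 broke down.

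To replace that input I would aim to prove, for the point center $p$, a multiplicity (equivalently, log canonical threshold) estimate strong enough to overcome the degree $N n^2$ — that is, a bound forcing the relevant restricted self-intersection above $N n^2$, or a correspondingly small upper bound on $\lct_p\!\left(X, \tfrac1n\cH\right)$. The natural mechanism is the hypertangent construction: from the Taylor expansion of a defining equation of $X$ at $p$ one builds auxiliary divisors in suitable $|mH|$ vanishing to high order at $p$, and intersecting them against $\Gamma$ iteratively drives the estimate up. The essential difficulty — and the reason no single clean lemma is easy to state — is that this construction, together with the $\lct$ bounds feeding it, must be made to work for \emph{every} smooth hypersurface of degree $N$ rather than only for a generic one; controlling the local geometry at an arbitrary point of an arbitrary smooth $X$, uniformly in $N$, is exactly where the new argument must do the work that Theorem~B previously did, and where I expect the proof to be most delicate.
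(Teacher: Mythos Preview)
Your proposal is not a proof: you correctly set up the Noether--Fano framework and locate the entire difficulty in excluding a point $p$ as a maximal center when $N \ge 5$, but you then leave that step open, saying only that you ``would aim to prove'' a sufficiently strong multiplicity bound and that the hypertangent construction is ``where the new argument must do the work.'' That is a description of the problem, not a solution. Worse, the mechanism you gesture at---hypertangent divisors built from the Taylor expansion of the defining equation---is not known to succeed for an \emph{arbitrary} smooth hypersurface: Pukhlikov's technique requires regularity conditions on the tangent cone at $p$ that hold only for generic $X$, and nothing in your sketch indicates how to remove that genericity hypothesis. (Your treatment of positive-dimensional centers is also sketchier than you suggest, but that is not the main issue.)

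The paper's argument avoids hypertangent divisors entirely. After reducing to a point center $P$ and cutting by one hyperplane to $Y$, it uses the inequality $\lct \ge 2/\sqrt{e}$ of \cite{dFEM04} to show that $(Y,2cB)$ is log terminal in dimension one (here $B$ is the restricted self-intersection), so Nadel vanishing bounds the colength of $\cJ(Y,2cB)$ by $\binom{N+1}{2}$. One then produces a divisor $E$ over $Y$ with center $P$, positive-dimensional center on the blow-up, and $a_E(Y,cB+P)\le 0$, and sets $\l = \val_E(P)/(c\,\val_E(B))$. The colength bound forces $\l > 1/(N+1)$; cutting further to a surface and applying $\lct \ge 2/\sqrt{e}$ together with B\'ezout forces $\l < (\sqrt{N}-2)/(\sqrt{N}(N-5))$. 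These are incompatible for $N \ge 7$, and \cite{dFEM03} handles $4 \le N \le 6$. The essential inputs you are missing are the multiplier-ideal length bound via Nadel vanishing and the $\lct$--multiplicity inequality; neither appears in your plan.
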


\begin{proof}
We assume that $N \ge 7$ and refer to \cite{dFEM03} for the remaining cases $4 \le N \le 6$. 

Suppose that $\f \colon X \rat X'$ is a birational map, but not an isomorphism, from $X$ 
to a Mori fiber space $X'$. The map is defined by a linear system $\cH$ whose members are
cut out by homogeneous forms of some degree $r$. 
Let $D,D' \in \cH$ be two general elements, and denote
\[
c := \can(X,D).
\]

Proposition~7.3 of \cite{dF13} implies that $c < 1/r$. On the other hand, 
Proposition~8.7 of \cite{dF13} implies that the set of points $Q \in X$ such that
$e_Q(D) > r$ is finite. 
It follows that the pair $(X,cD)$ is terminal in dimension one, and hence
there is a closed point $P \in X$ such that $\mld(P;X,cD) = 1$.  
This implies that $\mld(P;X,cD+P) \le 0$. 

Let $Y \subset X$ be a general hyperplane section 
through the point $P$, and let $B := D \cap D' \cap Y$. 
We remark that $Y$ is a smooth hypersurface of degree $N$ in $\P^{N-1}$
and $B$ is a complete intersection subscheme of $Y$ of codimension two. 
By inversion of adjunction (e.g., see Theorem~6.1 of \cite{dF13}),
we have $\mld(P;Y,cB) \le 0$. This means that $(Y,cB)$ is not log terminal near $P$. 
Notice, though, that $(Y,cB)$ is log terminal in dimension one. 
In fact, we have the following stronger property.

\begin{lem}\label{l:1}
The pair $(Y,2cB)$ is log terminal in dimension one.
\end{lem}

\begin{proof}
Let $C \subset Y$ be any irreducible curve.

Proposition~8.7 of \cite{dF13} implies that 
the set of points $Q \in X$ such that $e_Q(D \cap D') > r^2$
has dimension at most one. 
It follows by Proposition~8.5 of \cite{dF13} that, for a general choice of $Y$,
the set of points $Q \in Y$ such that $e_Q(B) > r^2$
is zero dimensional. Therefore we have 
$e_Q(B) \le r^2$ for a general point $Q \in C$. 

Fix such a point $Q \in C$, and let $S \subset Y$ be a smooth surface
cut out by general hyperplanes through $Q$. 
By Proposition~8.5 of \cite{dF13}, we have $e_Q(B|_S) \le r^2$. 
Since $B|_S$ is a zero-dimensional complete intersection subscheme of $S$, 
the multiplicity $e_Q(B|_S)$ is equal to
the Hilbert--Samuel multiplicity of the ideal $\I_{B|_S,Q} \subset \O_{S,Q}$
locally defining $B|_S$ near $Q$.
Then Theorem~0.1 of \cite{dFEM04}
implies that the log canonical threshold of $(S,B|_S)$ near $Q$ satisfies the inequality
\[
\lct_Q(S,B|_S) \ge \frac{2}{\sqrt{e_Q(B|_S)}}.
\]
Since $e_Q(B|_S) \le r^2$ and $c < 1/r$, this implies that $\lct_Q(S,B|_S) > 2c$, and hence
$(S,2cB|_S)$ is log terminal near $Q$. It follows by inversion of adjunction
that $(Y,2cB)$ is log terminal near $Q$. 
As $Q$ was chosen to be a general point of an arbitrary curve $C$ on $Y$, 
we conclude that $(Y,2cB)$ is log terminal in dimension one. 
\end{proof}

The lemma implies that the multiplier ideal $\J(Y,2cB)$
defines a zero-dimensional subscheme $\S \subset Y$. 
We have $H^1(Y,\J(Y,2cB) \otimes \O_Y(2)) = 0$ by Nadel's vanishing theorem, 
since $\om_Y$ is trivial, $B$ is cut out by forms of degree $r$, and $2cr < 2$.
It follows that there is a surjection
\[
H^0(Y,\O_Y(2)) \surj H^0(\S,\O_\S(2)) \cong H^0(\S,\O_\S)
\]
(here $\O_\S(2) \cong \O_\S$ because $\S$ is zero dimensional),
and therefore we have
\begin{equation}
\label{eq:1}
h^0(\S,\O_\S) \le h^0(Y,\O_Y(2)) = \binom{N+1}{2}.
\end{equation}

\begin{lem}
\label{l:3/2}
There exists a prime divisor $E$ over $X$ with center $P$ and log discrepancy
\[
a_E(X,cB+P) \le 0
\]
such that the center of $E$ in the blow-up of $X$ at $P$ has positive dimension. 
\end{lem}

\begin{proof}
Recall that $\mld(P;Y,cB) \le 0$. We fix a log resolution $f \colon Y' \to Y$ of
$(Y,B+P)$, and take a general hyperplane section $Z \subset Y$ through $P$. 
Let $Z' \subset Y'$ be the proper transform of $Z$. By Bertini's theorem, we can ensure 
that $Z'$ intersects transversally the exceptional locus of $f$ 
and the induced map $Z' \to Z$ is a log resolution of $(Z,B|_Z + P)$. 

We have $\mld(P;Z,cB|_Z) \le 0$ by inversion of adjunction. This means that there is 
a prime exceptional divisor $F \subset Z'$ with center $P$ in $Y$ 
and log discrepancy $a_F(Z,cB|_Z) \le 0$. 
There is a unique prime exceptional divisor $E \subset Y'$ such that $F$ is an irreducible component
of $E|_{Z'}$. Note that $E|_{Z'}$ is reduced. Since $E$ is the only prime divisor of $Y'$ that is
contained in either supports of the inverse images of $B$ and $P$
and whose restriction to $Z'$ contains $F$, we have 
$\val_E(B) = \val_F(B|_Z)$ and $\val_E(P) = \val_F(P)$.
It follows by adjunction formula that
\[
a_E(Y,cB + P) = a_F(Z,cB|_Z) \le 0.
\]
We deduce from the fact that $(Y,cB)$ is log terminal in dimension one
that the center of $E$ in $Y$ is equal to $P$. 
The fact that $E \cap Z' \ne \emptyset$
(for a general hyperplane section $Z \subset Y$ through $P$) implies that
the center of $E$ on the blow-up of $Y$ at $P$ is positive dimensional. 
\end{proof}

Let $E$ be as in Lemma~\ref{l:3/2}, and let
\[
\l := \frac{\val_E(P)}{c\val_E(B)}.
\]
In the next two lemmas, we establish opposite bounds on $\l$. 
The proof of the theorem will result by comparing the two bounds.

\begin{lem}
\label{l:2}
$\l > \dfrac{1}{N+1}$.
\end{lem}

\begin{proof}
Let $x,y \in \fm_{Y,P}$ be two general linear combinations of a given regular
system of parameters of $Y$ at $P$. 
Since the center of $E$ on the blow-up of $Y$ at $P$ is positive dimensional, by taking
$x,y$ general we can ensure that
$\val_E(f) \le \deg(f)\val_E(P)$ for any nonzero polynomial $f(x,y)$.

Let $d$ be any positive integer such that 
\[
d\val_E(P) \le - a_E(Y,2cB).
\]
For every nonzero polynomial $f(x,y)$ of degree $\le d$, we have $\val_E(f) \le -a_E(Y,2cB)$, 
and therefore $f \not\in \J(Y,2cB)\.\O_{Y,P}$. This means that if
$V \subset \O_{Y,P}$ is the $\C$-vector space spanned by the polynomials
in $x,y$ of degree $\le d$, then the quotient map $\O_{Y,P} \to \O_{\S,P}$
restricts to a injective map $V \inj \O_{\S,P}$, and therefore
\[
h^0(\S,\O_\S) \ge \dim_\C V = \binom{d+2}{2}.
\]
Comparing this inequality with the upperbound on $h^0(\S,\O_\S)$ obtained in \eqref{eq:1},
we conclude that $N > d$. It follows by our assumption on $d$ that
\[
N\val_E(P) > - a_E(Y,2cB).
\]
This means that $a_E(Y,2cB - NP) > 0$. Note, on the other hand, that
\[
a_E(Y,2cB - NP) = a_E(Y,(2-(N+1)\l)cB + P).
\]
Since $a_E(Y,cB + P) \le 0$ by Lemma~\ref{l:3/2}, we conclude that $(N+1)\l > 1$.
\end{proof}

\begin{lem}
\label{l:3}
$\l < \dfrac{\sqrt N - 2}{\sqrt{N}(N-5)}$.
\end{lem}

\begin{proof}
First, we observe that $(N-5)\l \le 1$. 
In fact, since $\lct_P(Y,P) = N-2$, we have
\[
a_E(Y,(N-3)\l c B + P) = a_E(Y,(N-2)P) \ge 0,
\]
and since $a_E(Y,cB + P) \le 0$ by Lemma~\ref{l:3/2}, we actually get $(N-3)\l \le 1$. 

Let $S \subset Y$ be a surface cut out by $N-4$ general hyperplane sections through $P$.
Note that $B|_S$ is a complete intersection zero-dimensional subscheme of $S$
cut out by two forms of degree $r$. 
We have 
\[
a_E(Y,(1-(N-5)\l)cB + (N-4)P) = a_E(Y,cB + P) \le 0.
\]
By our initial remark, the pair in the left hand side is effective. 
We can therefore apply inversion of adjunction, which gives $\mld(P;S,(1-(N-5)\l)cB|_S) \le 0$.
This means that
\[
\lct_P(S,B|_S) \le (1-(N-5)\l)c.
\]
By contrast, by using Theorem~0.1 of \cite{dFEM04}, Bezout's theorem, and the
inequality $c < 1/r$, we get the chain of inequalities
\[
\lct_P(S,B|_S) \ge \frac{2}{\sqrt{e_P(B|_S)}} \ge \frac{2}{r\sqrt N} > \frac{2c}{\sqrt N}.
\]
The lemma follows by comparing the two bounds on $\lct_P(S,B|_S)$.
\end{proof}

To conclude the proof of the theorem, 
we just observe that the inequalities in Lemmas~\ref{l:2} and~\ref{l:3}, combined,
imply that $N - 3\sqrt{N} + 1 < 0$, a condition that is never satisfied if $N \ge 7$.
\end{proof} 

We close this note with some comments on the errors in Lemmas~9.1 and~9.2 of \cite{dF13}. 

\begin{rmk}
\label{r:9.1}
Lemma~9.1 already fails in the following simple situation. 
Let $\s \colon \A^2 \to \A^1$ be the projection 
given by $\s(x,y) = x$, let $P \in \A^2$ be the origin in the coordinates $(x,y)$, 
and let $P' = \s(P)$. Let $X = (y+x^2+y^2=0) \subset \A^2$, and 
consider the divisor $E = [P]$ on $X$.
Note that $\m = \val_E(P) = 1$ and $W = W^1(E)$ is the fiber of $J_\infty X \to X$ over $P$.
Moreover, $\val_E|_{\C(\A^1)} = \val_{E'}$ where $E' = [P']$, and 
$W' = W^1(E')$ is the fiber of $J_\infty\A^1 \to \A^1$ over $P'$.
In particular, 
\[
((W')^0)_1 = (W')_1 = T_{P'}\A^1.
\] 
In the coordinates $(x,y,x',y',x'',y'',\dots)$ of $J_\infty\A^2$, 
the ideal of $W$ contains the elements $x,y,y',y''+2(x')^2$.
Fix any integer $m \ge 2$. 
Since the element $y''+2(x')^2$ is in the ideal of $W_m$,
after taking the degeneration to homogeneous ideals as in the proof of Lemma~9.1, 
the ideal of $(W_m)^0$ contains the elements $x,y,y',(x')^2$. Therefore
$((W_m)^0)_1 = \{0\} \subset T_P\A^2$ (set-theoretically), and hence
\[
\s_1\big(((W_m)^0)_1\big) = \{0\} \in T_{P'}\A^1.
\]
This shows that the lemma does not hold in this case. 

The error in the proof of Lemma~9.1 is in the last formula. The formula is true
before taking closures (namely, we have
$\pi_{m,0}^{-1}(P) \cap T_P^{(m)}\A^n = T_P^{(m)}X$),
but after taking closures in the projective space we only get an inclusion
$\ov{\pi_{m,0}^{-1}(P)} \cap \ov{T_P^{(m)}\A^n} \supset \ov{T_P^{(m)}X}$.
In the example discussed above, for instance, the point $(0:0:0:0:1)$ in the
homogeneous coordinates $(u : x' : y' : x'' : y'')$ 
belongs to the closure of $W_2$ (which is the same as
$\pi_{2,0}^{-1}(P)$), but not to the closure of $T_P^{(2)}X$.
\end{rmk}

\begin{rmk}
\label{r:9.2}
The error in the proof of Lemma~9.2 is 
in the wrong assertion that the image under a finite morphism of a Cohen--Macaulay scheme is Cohen--Macaulay. 
This fails for instance for general projections to $\P^4$ of most projective surfaces in $\P^5$. 
\end{rmk}

\begin{bibdiv}
\begin{biblist}

\bib{dF13}{article}{
   author={de Fernex, Tommaso},
   title={Birationally rigid hypersurfaces},
   journal={Invent. Math.},
   volume={192},
   date={2013},
   number={3},
   pages={533--566},
}

\bib{dFEM03}{article}{
   author={de Fernex, Tommaso},
   author={Ein, Lawrence},
   author={Musta{\c{t}}{\u{a}}, Mircea},
   title={Bounds for log canonical thresholds with applications to
   birational rigidity},
   journal={Math. Res. Lett.},
   volume={10},
   date={2003},
   number={2-3},
   pages={219--236},
}

\bib{dFEM04}{article}{
   author={de Fernex, Tommaso},
   author={Ein, Lawrence},
   author={Musta{\c{t}}{\u{a}}, Mircea},
   title={Multiplicities and log canonical threshold},
   journal={J. Algebraic Geom.},
   volume={13},
   date={2004},
   number={3},
   pages={603--615},
}

\end{biblist}
\end{bibdiv}

\end{document}